\documentclass[a4paper,10pt,reqno]{amsart}
\usepackage{amsmath,amsfonts,amssymb,amsthm}
\usepackage{mathtools}
\usepackage{color,xcolor}
\usepackage[margin=3cm]{geometry}
\usepackage[colorlinks=true,citecolor=blue,urlcolor=customgreen]{hyperref}
\usepackage{caption,enumerate}
\newtheorem{theorem}{Theorem}[section]

\newtheorem{corollary}[theorem]{Corollary}
\newtheorem{remark}{Remark}[section]
\numberwithin{equation}{section}

\newcommand{\CC}{\ensuremath{\mathbb{C}}}
\newcommand{\RR}{\ensuremath{\mathbb{R}}}
\newcommand{\ZZ}{\ensuremath{\mathbb{Z}}}

\newcommand{\md}{\mathrm{d}}
\newcommand{\me}{\mathrm{e}}
\newcommand{\mo}{\mathcal{O}}
\newcommand{\mi}{\mathrm{i}}


\definecolor{customgreen}{rgb}{0.0, 0.5, 0.0}

\author[P.-C. Hang]{Peng-Cheng Hang}
\address{Peng-Cheng Hang\\School of Mathematics and Statistics, Donghua University, Shanghai 201620,
	People's Republic of China.}\email{mathroc618@outlook.com}

\keywords{Asymptotics, Watson's lemma, Laplace's method, special matrix functions}
\subjclass[2020]{33C47, 41A60, 44A10, 47A56}
	
\begin{document}
\title[Matrix analogues of asymptotic methods]{Matrix analogues of some asymptotic methods\\ for Laplace integrals}

\begin{abstract}
  The matrix analogues of Laplace's method and Watson's lemma are derived via the approach described by Williams and Wong [J. Approx. Theory 24 (4) (1974), 378--384]. Some examples are also given.
\end{abstract}

\maketitle

\section{\bf Introduction}
  Williams and Wong \cite{WiWo} studied the operator analogue of Watson's lemma. In their work, a key assumption for the matrix form of Watson's lemma is that the matrix is normal. Luke and Barker \cite{LuBa} tried to weaken the ``normal" condition, but the constant $M_1$ in \cite[Eq. (40)]{LuBa} is not absolute, which motivates us to find a correct answer.
  
  This paper is organised as follows. In Section \ref{Sect 2}, we deduce an operator version of Laplace's method. In Section \ref{Sect 3}, we make a revision of Luke and Barker's result \cite[Theorem 3]{LuBa}. This paper concludes with some examples in Section \ref{Sect 4}.
  
\section{\bf Operator analogue of Laplace's method}\label{Sect 2}
  In this section, we adopt the definition of asymptotic expansions of operator-valued functions introduced in \cite[Sect. 3]{WiWo} and establish the operator analogue of Laplace's method.

  Let $X$ be a Banach space over the complex field $\CC$ and $Y$ be a dense subspace of $X$. Let $A$ be a closed linear operator from $Y$ into $X$. The spectrum and resolvent of $A$ are denoted by $\sigma(A)$ and $R_{\lambda}(A)=\left(\lambda I-A\right)^{-1}$, respectively.
  
  According to \cite[Sect. 2]{WiWo}, if, in addition, $A$ satisfies the following conditions:
  \begin{itemize}
  	\item $(\text{C}_1)$ There exists a positive $\Delta$ such that
  	\[\sigma(A)\subset \left\{\lambda\in\CC:\lambda\ne 0\text{ and }\left|\arg \lambda\right|\leqslant \frac{\pi}{2}-\Delta\right\},\]
  	\item $(\text{C}_2)$ Let $\omega(A):=\inf\{\Re(\lambda):\lambda\in\sigma(A)\}$. There exists $M>0$ and $0<\omega_1\leqslant \omega(A)$ such that for any positive integer $n$,
  	\begin{equation}\label{C2 condition inequality}
  		\left\|R_{\lambda}\left(A\right)^n\right\|\leqslant M\left(\omega_1-\lambda\right)^{-n}\quad (\lambda<\omega_1),
  	\end{equation}
  \end{itemize}
  then $\me^{tA}$ is well-defined for real $t$. Then we can define for $0<\alpha<\infty$ that
  \begin{equation}\label{fractional power of operator}
  	A^{-\alpha}=\frac{1}{\Gamma(\alpha)}\int_0^{\infty}t^{\alpha-1}\me^{-tA}\md t.
  \end{equation}
  Furthermore, there exists a strongly continuous semigroup, $\left\{\me^{-tA}:0\leqslant t<\infty\right\}$, of bounded linear operators on $X$ such that
  \begin{equation}\label{semigroup inequality}
  	\left\|\me^{-tA}\right\|\leqslant M\me^{-t\omega_1},\quad \text{for all }0<t<\infty.
  \end{equation}
  
  Let us state our main results. Our proof is based on the classical procedure in \cite[p. 57-60]{Wong}.
  
  \begin{theorem}\label{Thm 2.1}
  	Let $\{A_{\alpha}\}$ be a net of closed linear operators from $Y$ into $X$, each satisfying conditions $(\text{C}_1)$ and $(\text{C}_2)$ with the same $\Delta$ and $M$ and such that there is some positive $\eta$ with $\omega_1(A_{\alpha})\geqslant \eta\,\omega(A_{\alpha})$ for each $A_{\alpha}$. Define the integral
  	\[I(z):=\int_a^b \varphi(x)\me^{-zh(x)}\md x,\]
  	where $h(x)$ is real, $a\in\RR$ is finite and $b(>a)$ is finite or infinite.
  	
  	Assume that (i) $h(x)>h(a)$ for all $x\in (a,b)$, and for every $\delta>0$ the infimum of $h(x)-h(a)$ in $[a+\delta,b)$ is positive; (ii) $h'(x)$ and $\varphi(x)$ are continuous in a neighborhood of $x=a$, except possibly at $a$; (iii) as $x\to a^+$, the following expansions hold
  	\[h(x)\sim h(a)+\sum_{s=0}^{\infty}a_s\left(x-a\right)^{s+\mu},\quad\varphi(x)\sim \sum_{s=0}^{\infty}b_s\left(x-a\right)^{s+\lambda-1},\]
  	where the expansion for $h(x)$ is differentiable, $\lambda$ and $\mu$ are positive constants, and $a_0>0$; and (iv) the integral $I(z)$ converges absolutely for all sufficiently large $z$.
  	
  	Then the bounded linear operator $I(A_{\alpha})$ has the asymptotic expansion
  	\begin{equation}\label{Laplace's method expansion}
  		I(A_{\alpha})\sim \me^{-h(a)A_{\alpha}}\sum_{s=0}^{\infty}\Gamma\left(\frac{s+\lambda}{\mu}\right)c_s A_{\alpha}^{-\frac{s+\lambda}{\mu}},\quad \text{as }\left\|A_{\alpha}^{-1}\right\|\to 0,
  	\end{equation}
  	where the coefficients $c_s$ are defined by the expansion
  	\begin{equation}\label{definition of coefficents}
  		\frac{\varphi(x)}{h'(x)}\sim \sum_{s=0}^{\infty}c_s t^{(s+\lambda-\mu)/\mu},\quad \text{as }\,t=h(x)-h(a)\to 0^+.
  	\end{equation}
  	In particular, $c_0=b_0\mu^{-1}a_0^{-\lambda/\mu}$.
  \end{theorem}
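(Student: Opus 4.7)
The plan is to adapt the classical derivation of Laplace's method in \cite[pp. 57--60]{Wong} to the operator setting, reducing the problem to the operator analogue of Watson's lemma that is developed in Section \ref{Sect 3}. First, using the semigroup property, I factor
\[I(A_{\alpha})=\me^{-h(a)A_{\alpha}}\int_a^b \varphi(x)\me^{-(h(x)-h(a))A_{\alpha}}\md x,\]
so that, by hypothesis (i), the exponent inside the integral has a nonnegative ``height''. I then split this integral at $a+\delta$ for some fixed $\delta>0$ small enough that the expansions in (iii) are valid on $(a,a+\delta)$ and $h'$ has fixed positive sign there; denote the two pieces $J_1(A_{\alpha})$ and $J_2(A_{\alpha})$.

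For the tail $J_2(A_{\alpha})$, hypothesis (i) supplies some $\eta_{\delta}>0$ with $h(x)-h(a)\geqslant \eta_{\delta}$ on $[a+\delta,b)$, so (\ref{semigroup inequality}) yields
\[\|J_2(A_{\alpha})\|\leqslant M\me^{-\eta_{\delta}\omega_1(A_{\alpha})/2}\int_{a+\delta}^b |\varphi(x)|\me^{-(h(x)-h(a))\omega_1(A_{\alpha})/2}\md x,\]
where the remaining integral is bounded for all sufficiently large $\omega_1(A_{\alpha})$ by (iv), while the prefactor decays faster than any polynomial in $\omega_1(A_{\alpha})^{-1}$. Since (\ref{fractional power of operator}) with $\alpha=1$ combined with (\ref{semigroup inequality}) gives $\|A_{\alpha}^{-1}\|\leqslant M/\omega_1(A_{\alpha})$, and since $\|A_{\alpha}^{-1}\|\to 0$ therefore forces $\omega_1(A_{\alpha})\to\infty$, the tail is $o(\|A_{\alpha}^{-1}\|^{N})$ for every $N>0$ and only contributes to the error. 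For the main part $J_1(A_{\alpha})$ I perform the substitution $t=h(x)-h(a)$, which is a local diffeomorphism near $a$ by (iii) with $(x-a)^{\mu}\sim t/a_0$, obtaining
\[J_1(A_{\alpha})=\int_0^{T}\frac{\varphi(x(t))}{h'(x(t))}\me^{-tA_{\alpha}}\md t,\qquad T=h(a+\delta)-h(a),\]
with the integrand admitting the power-type expansion (\ref{definition of coefficents}) at $t=0^+$.

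Now I apply the operator Watson's lemma of Section \ref{Sect 3} to $J_1(A_{\alpha})$, which delivers
\[J_1(A_{\alpha})\sim \sum_{s=0}^{\infty}\Gamma\left(\frac{s+\lambda}{\mu}\right)c_s A_{\alpha}^{-(s+\lambda)/\mu},\]
and recombining with $\me^{-h(a)A_{\alpha}}$ (which is bounded uniformly in $\alpha$ by (\ref{semigroup inequality})) produces (\ref{Laplace's method expansion}). The leading coefficient $c_0=b_0\mu^{-1}a_0^{-\lambda/\mu}$ is read off by substituting the leading terms of the expansions in (iii) into (\ref{definition of coefficents}). The main obstacle is ensuring that \emph{all} estimates are uniform in the net $\{A_{\alpha}\}$, as required by the Williams--Wong definition of operator asymptotic expansions; this is precisely what the common constants $\Delta$, $M$ and $\eta$ in the hypotheses guarantee, with the condition $\omega_1(A_{\alpha})\geqslant \eta\,\omega(A_{\alpha})$ being the crucial one that synchronises the exponential decay of the tail with the polynomial scale $\|A_{\alpha}^{-1}\|$ controlling the terms of the expansion.
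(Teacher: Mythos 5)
Your overall strategy coincides with the paper's: factor out $\me^{-h(a)A_{\alpha}}$, split the integral near $x=a$, substitute $t=h(x)-h(a)$ and apply an operator form of Watson's lemma to the main piece, and show the tail is exponentially negligible. However, there is one genuine logical gap, at the key quantitative step. You derive $\|A_{\alpha}^{-1}\|\leqslant M/\omega_1(A_{\alpha})$ from \eqref{fractional power of operator} and \eqref{semigroup inequality} and then assert that $\|A_{\alpha}^{-1}\|\to 0$ ``forces'' $\omega_1(A_{\alpha})\to\infty$. That inequality only gives $\omega_1(A_{\alpha})\leqslant M\|A_{\alpha}^{-1}\|^{-1}$, an \emph{upper} bound on $\omega_1(A_{\alpha})$; it neither forces $\omega_1(A_{\alpha})\to\infty$ nor lets you convert the tail bound $K\me^{-\eta_{\delta}\,\omega_1(A_{\alpha})/2}$ into $o\left(\|A_{\alpha}^{-1}\|^{N}\right)$, which is what the Williams--Wong definition of an operator asymptotic expansion demands. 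What is needed is the reverse inequality $\omega_1(A_{\alpha})\geqslant \eta\sin\Delta\,\|A_{\alpha}^{-1}\|^{-1}$, and this is precisely where the hypotheses $(\text{C}_1)$ and $\omega_1(A_{\alpha})\geqslant\eta\,\omega(A_{\alpha})$ enter: every $\lambda\in\sigma(A_{\alpha})$ satisfies $|\lambda|\geqslant\|A_{\alpha}^{-1}\|^{-1}$ (spectral radius of the inverse) and $\Re\lambda\geqslant|\lambda|\sin\Delta$ by the sector condition, so $\omega(A_{\alpha})\geqslant\sin\Delta\,\|A_{\alpha}^{-1}\|^{-1}$; this is \cite[Eq. (3)]{WiWo}, which the paper invokes in the first line of its proof. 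You correctly name this hypothesis as ``the crucial one'' in your closing remarks, but the inequality you actually wrote down points the wrong way, so the argument as written does not close.

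A secondary point: the Watson lemma to invoke for $J_1(A_{\alpha})$ is the operator-valued one of Williams and Wong \cite[Theorem 1]{WiWo}, not the one developed in Section \ref{Sect 3}; the latter is stated for matrices (under $\mu(A)\geqslant\eta\,\omega(A)$) and for integrals over $[0,\infty)$ with a convergent series hypothesis, whereas here you have a net of closed operators on a Banach space and only the asymptotic expansion \eqref{definition of coefficents} on a finite interval $[0,T]$. With these two repairs your argument matches the paper's proof.
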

  
  \begin{proof}
  	Let $\omega_{1,\alpha}=\omega_1(A_{\alpha})$ and $\omega_{\alpha}=\omega(A_{\alpha})$. Then $\omega_{1,\alpha}^{-1}\leqslant \|A_{\alpha}^{-1}\|\left(\eta\sin\Delta\right)^{-1}$ (see \cite[Eq. (3)]{WiWo}). Thus $\omega_{1,\alpha}\to +\infty$, as $\|A_{\alpha}^{-1}\|\to 0$. In view of this, using \eqref{semigroup inequality} and the condition (iv) gives
  	\begin{align*}
  		\|I(A_{\alpha})\|&\leqslant \big\|\me^{-h(a)A_{\alpha}}\big\|\int_a^b |\varphi(x)|\left\|\me^{-(h(x)-h(a))A_{\alpha}}\right\|\md x\\
  		& \leqslant M\big\|\me^{-h(a)A_{\alpha}}\big\|\int_a^b |\varphi(x)|\cdot \me^{-\omega_{1,\alpha}(h(x)-h(a))}\md x<\infty,
  	\end{align*}
  	which confirms the boundness of $I(A_{\alpha})$ for suitable $\alpha$.
  	
  	Conditions (ii) and (iii) imply the existence of a number $c$ in $(a,b)$ such that $h'(x)$ and $\varphi(x)$ are continuous in $(a,c]$, and $h'(x)$ is positive there. Put $T=h(c)-h(a)>0$ and introduce the new variable $t=h(x)-h(a)$. Since $h(x)$ is increasing in $(a,c)$, we may write
  	\begin{equation}\label{new integral in Laplace's method}
  		\me^{h(a)A_{\alpha}}\int_a^c \varphi(x)\me^{-h(x)A_{\alpha}}\md x=\int_0^T f(t)\me^{-tA_{\alpha}}\md t
  	\end{equation}
  	with $f(t)$ beging the continuous function in $(0,T]$ given by
  	\[f(t)=\varphi(x)\frac{\md x}{\md t}=\frac{\varphi(x)}{h'(x)}.\]
  	Now $f(t)$ has the expansion \eqref{definition of coefficents} as $t\to 0^+$. Applying the operator analogue of Watson's lemma \cite[Theorem 1]{WiWo} to the integral on the right-hand side of \eqref{new integral in Laplace's method}, we have
  	\begin{equation}\label{leading expansion in Laplace's method}
  		\int_a^c \varphi(x)\me^{-h(x)A_{\alpha}}\md x\sim \me^{-h(a)A_{\alpha}}\sum_{s=0}^{\infty}\Gamma\left(\frac{s+\lambda}{\mu}\right)c_s A_{\alpha}^{-\frac{s+\lambda}{\mu}},\quad \text{as }\left\|A_{\alpha}^{-1}\right\|\to 0.
  	\end{equation}
  	
  	Now estimate the integral on the remaining interval $(c,b)$. Let $\omega_0$ be a value of $z>0$ for which $I(z)$ is absolutely convergent, ant set
  	\[\varepsilon=\inf_{c\leqslant x<b}(h(x)-h(a)).\]
  	By condition (i), $\varepsilon$ is positive. Now for $\omega_{1,\alpha}\geqslant \omega_0$,
  	\begin{align}
  		\omega_{1,\alpha}[h(x)-h(a)]& =\left(\omega_{1,\alpha}-\omega_0\right)[h(x)-h(a)]+\omega_0[h(x)-h(a)] \nonumber\\
  		& \geqslant \left(\omega_{1,\alpha}-\omega_0\right)\varepsilon+\omega_0[h(x)-h(a)].\label{h(x) inequality in Laplace's method}
  	\end{align}
  	Let $I_1(A_{\alpha}):=\int_c^b \varphi(x)\me^{-h(x)A_{\alpha}}\md x$. Applying \eqref{semigroup inequality}, \eqref{h(x) inequality in Laplace's method} and the condition (iv), we get
  	\begin{align*}
  		\big\|\me^{h(a)A_{\alpha}}I_1(A_{\alpha})\big\|& \leqslant M\int_c^b |\varphi(x)|\me^{-\omega_{1,\alpha}(h(x)-h(a))}\md x\\
  		& \leqslant M\me^{(\omega_0-\omega_{1,\alpha})\varepsilon}\int_c^b |\varphi(x)|\me^{-\omega_0(h(x)-h(a))}\md x=K\me^{-\omega_{1,\alpha}\varepsilon}.
  	\end{align*}
  	Recall that $\omega_{1,\alpha}^{-1}\leqslant \|A_{\alpha}^{-1}\|\left(\eta\sin\Delta\right)^{-1}$. Then
  	\begin{equation}\label{reminder estimate in Laplace's method}
  		\big\|\me^{h(a)A_{\alpha}}I_1(A_{\alpha})\big\|\leqslant K\me^{-C/\|A_{\alpha}^{-1}\|},
  	\end{equation}
  	where $C=\varepsilon\eta\sin\Delta$. The result \eqref{Laplace's method expansion} follows from \eqref{leading expansion in Laplace's method} and \eqref{reminder estimate in Laplace's method}.
  \end{proof}
  
  A similar discussion of \cite[Corollary]{WiWo} gives the following result.
  \begin{corollary}
  	Let $\{A_{\alpha}\}$ be a net of bounded linear operators on a Hilbert space $\mathcal{H}$. If each $A_{\alpha}$ is normal and satisfies the condition $(\text{C}_1)$ with the same $\Delta$, and if $\varphi(x)$ and $h(x)$ satisfy conditions (i), (ii), (iii) and (iv), then the expansion (\ref{Laplace's method expansion}) holds.
  \end{corollary}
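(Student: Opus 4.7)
My plan is to deduce the corollary directly from Theorem \ref{Thm 2.1} by checking that, in the Hilbert-space normal setting, condition $(\text{C}_2)$ and the ratio inequality $\omega_1(A_\alpha)\geqslant\eta\,\omega(A_\alpha)$ hold automatically with uniform constants across the net. Each $A_\alpha$ is bounded, hence closed and defined on $Y=X=\mathcal{H}$, so no other hypothesis of Theorem \ref{Thm 2.1} requires attention.

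The main step uses the continuous functional calculus: for a bounded normal operator $A$ on $\mathcal{H}$ one has $\|f(A)\|=\sup_{z\in\sigma(A)}|f(z)|$, which yields
\[
\bigl\|R_\lambda(A_\alpha)^n\bigr\|=\sup_{z\in\sigma(A_\alpha)}|\lambda-z|^{-n}=\mathrm{dist}(\lambda,\sigma(A_\alpha))^{-n}.
\]
Setting $\omega_{1,\alpha}:=\omega(A_\alpha)$, every $z\in\sigma(A_\alpha)$ satisfies $\Re z\geqslant\omega_{1,\alpha}$, so for real $\lambda<\omega_{1,\alpha}$ one has $|\lambda-z|\geqslant\omega_{1,\alpha}-\lambda$. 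Consequently $(\text{C}_2)$ holds with the absolute constant $M=1$, uniformly in $\alpha$. I would also verify $\omega(A_\alpha)>0$: compactness of $\sigma(A_\alpha)$ combined with $(\text{C}_1)$ forces $\Re z\geqslant|z|\sin\Delta>0$ on the spectrum, so $\omega(A_\alpha)>0$. With $\omega_{1,\alpha}=\omega(A_\alpha)$ the ratio is identically $1$, hence the net hypothesis is met with $\eta=1$.

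Having produced $M=1$ and $\eta=1$ once and for all, Theorem \ref{Thm 2.1} applies to $\{A_\alpha\}$ and gives precisely \eqref{Laplace's method expansion}. The only real obstacle here is conceptual rather than technical: one must notice that normality upgrades the general Banach-space resolvent bound to the exact identity above, and that the natural choice $\omega_1=\omega(A)$ makes the ratio condition costless. After these observations the corollary is an immediate invocation of Theorem \ref{Thm 2.1}.
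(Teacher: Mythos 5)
Your argument is correct and is essentially the paper's own: the paper disposes of this corollary by appealing to the analogous discussion in Williams and Wong, which is exactly your observation that the spectral theorem for bounded normal operators gives $(\text{C}_2)$ with $M=1$ and $\omega_1(A_\alpha)=\omega(A_\alpha)$ (hence $\eta=1$), after which Theorem \ref{Thm 2.1} applies verbatim.
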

  
\section{\bf The work of Luke and Barker revisited}\label{Sect 3}
  Luke and Barker \cite[Sect. II]{LuBa} showed that their result \cite[Theorem 3]{LuBa} satisfies the hypotheses of \cite[Theorem 1]{WiWo}. However, the constant $M_1$ in \cite[Eq. (40)]{LuBa} depends on the index $m$. More explicitly, we have the following fact
  \[M_1=\sum_{r=0}^{p}\left(m\right)_r\left|\lambda_1-\lambda\right|^{-r}>1+m\left|\lambda_1-\lambda\right|^{-1},\]
  which implies that under their restrictions, the inequality \eqref{C2 condition inequality} is not valid.
  
  In this section, we shall make a revision of their result. Before this, let us do some preparations.
  
  For $A\in \CC^{m\times m}$, let $A^+:=\frac{1}{2}\left(A+A^H\right)$ and define its 2-norm by
  \[\|A\|=\sup_{\|x\|_2=1}\|Ax\|_2,\]
  where for any vector $x\in\CC^{m\times 1}$, $\|x\|_2=\sqrt{x^H x}$. Denote
  \[\mu(A)=\min\big\{z\in\CC:z\in\sigma\left(A^+\right)\big\},\ \omega(A)=\min\{\Re(\lambda):\lambda\in\sigma(A)\}.\]
  By \cite[p. 647]{HiGa}, it follows that
  \begin{equation}\label{matrix norm inequality}
  	\left\|\me^{-tA}\right\|\leqslant \me^{-t\mu(A)}\quad (t\geqslant 0).
  \end{equation}
  
  Next we derive the matrix form of Watson's lemma.
  
  \begin{theorem}
  	Let $A$ be a matrix such that the condition $(\text{C}_1)$ (see Sect. \ref{Sect 2}) is satisfied and there is a positive $\eta$ with $\mu(A)\geqslant \eta\,\omega(A)$. Assume that: (a) the function $f(t)$ is a locally integrable function on $[0,\infty)$ and admits the expansion
  	\[f(t)=\sum_{n=1}^{\infty}a_n t^{n/r-1},\quad |t|\leqslant c+\delta,\]
  	where $r,c$ and $\delta$ are positive; and (b) there exist positive constants $M_0$ and $b$ independent of $t$ such that $|f(t)|<M_0 e^{bt}$ holds for $t\geqslant c$. Then
  	\begin{equation}\label{Watson's lemma expansion}
  		\int_{0}^{\infty}f(t)\me^{-tA}\md t\sim \sum_{n=1}^{\infty}a_n\Gamma(n/r)A^{-n/r},\quad\text{as }\left\|A^{-1}\right\|\to 0.
  	\end{equation}
  \end{theorem}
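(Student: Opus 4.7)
The plan is to imitate the classical proof of Watson's lemma, with the matrix bound $\|\me^{-tA}\|\leqslant \me^{-t\mu(A)}$ from \eqref{matrix norm inequality} taking the role of the scalar estimate $|\me^{-tz}|\leqslant \me^{-t\Re z}$, and with the fractional power identity
\[\Gamma(n/r)\,A^{-n/r}=\int_0^{\infty}t^{n/r-1}\me^{-tA}\md t,\]
which is just \eqref{fractional power of operator} with $\alpha=n/r$, furnishing the terms of the expansion. I would first verify the preliminary fact that $\mu(A)\to +\infty$ as $\|A^{-1}\|\to 0$: every eigenvalue $\lambda\in\sigma(A)$ satisfies $|\lambda|\geqslant 1/\|A^{-1}\|$; condition $(\text{C}_1)$ forces $\Re\lambda\geqslant |\lambda|\sin\Delta$, whence $\omega(A)\geqslant \sin\Delta/\|A^{-1}\|$, and the hypothesis $\mu(A)\geqslant \eta\,\omega(A)$ yields
\[\mu(A)^{-1}\leqslant (\eta\sin\Delta)^{-1}\|A^{-1}\|.\]
This quantitative link is what makes an asymptotic statement in $\|A^{-1}\|$ meaningful.

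Next, for a fixed positive integer $N$, hypothesis (a) supplies a Taylor-type remainder
\[f(t)=\sum_{n=1}^{N}a_n t^{n/r-1}+R_N(t),\qquad |R_N(t)|\leqslant K_N\,t^{(N+1)/r-1}\quad (0\leqslant t\leqslant c).\]
I would decompose $\int_0^{\infty}=\int_0^c+\int_c^{\infty}$, apply the expansion on the first interval, and convert each monomial via
\[\int_0^{c}t^{n/r-1}\me^{-tA}\md t=\Gamma(n/r)\,A^{-n/r}-\int_c^{\infty}t^{n/r-1}\me^{-tA}\md t.\]
Collecting terms, the difference between $\int_0^{\infty}f(t)\me^{-tA}\md t$ and the partial sum $\sum_{n=1}^{N}a_n\Gamma(n/r)\,A^{-n/r}$ splits into three pieces: the interior remainder $\int_0^c R_N(t)\me^{-tA}\md t$, the monomial tails $\sum_{n=1}^{N}a_n\int_c^{\infty}t^{n/r-1}\me^{-tA}\md t$, and the far-tail $\int_c^{\infty}f(t)\me^{-tA}\md t$.

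Each piece is estimated by pulling the norm inside and invoking \eqref{matrix norm inequality}. The interior piece is dominated, after extending the integral to $[0,\infty)$, by $K_N\Gamma((N+1)/r)\,\mu(A)^{-(N+1)/r}$, which by the preliminary bound is $O(\|A^{-1}\|^{(N+1)/r})$; this supplies the sharp remainder required by the statement. The two tails over $[c,\infty)$ each reduce to a scalar integral of the form $\int_c^{\infty}\me^{(b-\mu(A))t}p(t)\,\md t$ with $p$ polynomial, which, once $\mu(A)>b$ (secured by the preliminary bound for $\|A^{-1}\|$ small enough), decay faster than any power of $\|A^{-1}\|$. Summing these estimates produces \eqref{Watson's lemma expansion} in the sense of the asymptotic-expansion definition adopted from \cite[Sect. 3]{WiWo}.

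The step demanding the most care is the opening preliminary: for non-normal $A$, $\mu(A)$ and $1/\|A^{-1}\|$ are not generally comparable, and it is precisely the twin hypotheses $(\text{C}_1)$ and $\mu(A)\geqslant \eta\,\omega(A)$ that force them to be so uniformly. This is also the point at which the argument of \cite{LuBa} breaks down, as noted at the start of this section: the constant $M_1$ in \cite[Eq.~(40)]{LuBa} carries an index-dependence that destroys exactly the uniformity required to bound the three error terms simultaneously.
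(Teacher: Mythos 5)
Your argument follows the same route as the paper's proof: the preliminary bound $\mu(A)^{-1}\leqslant(\eta\sin\Delta)^{-1}\|A^{-1}\|$, the identity \eqref{fractional power of operator} with $\alpha=n/r$ to produce the terms $\Gamma(n/r)A^{-n/r}$, and the logarithmic-norm estimate \eqref{matrix norm inequality} in place of the scalar bound $|\me^{-tz}|\leqslant\me^{-t\Re z}$. The only structural difference is bookkeeping: you split the integral at $t=c$ and control three error pieces separately, whereas the paper combines hypotheses (a) and (b) into a single global bound $\big|f(t)-\sum_{n=1}^{N-1}a_nt^{n/r-1}\big|\leqslant Ct^{N/r-1}\me^{bt}$ valid on all of $(0,\infty)$, which disposes of the tail contributions in one stroke. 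Either variant of the classical Watson argument is acceptable here.

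There is, however, a genuine gap at the very end. All of your estimates are expressed in powers of $\|A^{-1}\|$: the interior piece is $O\big(\|A^{-1}\|^{(N+1)/r}\big)$ and the tails decay exponentially in $1/\|A^{-1}\|$. But the expansion \eqref{Watson's lemma expansion}, in the sense of \cite[Sect. 3]{WiWo} that you invoke in your last sentence, requires the remainder after $N$ terms to be $o\big(\|A^{-N/r}\|\big)$ --- small relative to the \emph{norm of the matrix power}, not relative to the corresponding \emph{power of the norm}. For non-normal $A$ these are not interchangeable: submultiplicativity only gives $\|A^{-N/r}\|\lesssim\|A^{-1}\|^{N/r}$, which is the useless direction. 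You still need the moment-type inequality from \cite[p. 382]{WiWo},
\[
\left\|A^{-1}\right\|^{(N+1)/r}\leqslant C_1\big\|A^{-N/r}\big\|\left\|A^{-1}\right\|^{1/r},
\]
to convert $O\big(\|A^{-1}\|^{(N+1)/r}\big)$ into $o\big(\|A^{-N/r}\|\big)$; the same inequality bounds $\|A^{-N/r}\|$ from below by a constant multiple of $\|A^{-1}\|^{N/r}$, so that your exponentially small tail terms are likewise $o\big(\|A^{-N/r}\|\big)$. This is precisely the closing step of the paper's proof, and without it the final sentence of your argument does not follow.
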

  
  \begin{proof}
  	Fix an integer $N\geqslant 2$. Clearly, there is a constant $C$ such that for all $t>0$,
  	\begin{equation}\label{integrand estimate in Watson's lemma}
  		\bigg|f(t)-\sum_{n=1}^{N-1}a_n t^{n/r-1}\bigg|\leqslant Ct^{N/r-1}\me^{bt}.
  	\end{equation}
  	A similar derivation of \cite[Eq. (3)]{WiWo} gives
  	\begin{equation}\label{mu estimate}
  		\mu(A)\geqslant \left\|A^{-1}\right\|^{-1}\eta\sin\Delta.
  	\end{equation}
  	 This together with \eqref{matrix norm inequality} and \eqref{fractional power of operator} shows that $A^{-n/r}$  is well-defined. Using \eqref{fractional power of operator}, we can infer that
  	\[R_N(A):=\int_{0}^{\infty}f(t)\me^{-tA}\md t-\sum_{n=1}^{N-1}a_n\Gamma(n/r)A^{-n/r}\]
  	satisfies
  	\[R_N(A)=\int_{0}^{\infty}\bigg(f(t)-\sum_{n=1}^{N-1}a_n t^{n/r-1}\bigg)\me^{-tA}\md t.\]
  	
  	If $\|A^{-1}\|\leqslant \frac{\eta\sin\Delta}{2b}$, then $\mu(A)\geqslant 2b$ by \eqref{mu estimate} and hence
  	\[\left\|R_N(A)\right\|\leqslant CM\int_0^{\infty}t^{N/r-1}\me^{(b-\mu(A))t}\md t\]
  	by virtue of \eqref{matrix norm inequality} and \eqref{integrand estimate in Watson's lemma}. Since $\mu(A)\to \infty$, it follows from \eqref{mu estimate} that
  	\[\left\|R_N(A)\right\|=\mo\left(\mu\left(A\right)^{-N/r}\right)=\mo\left(\left\|A^{-1}\right\|^{-N/r}\right).\]
  	Recall the inequality \cite[p. 382]{WiWo}
  	\[\left\|A^{-1}\right\|^{-N/r}\leqslant C_1\big\|A^{-(N-1)/r}\big\|\left\|A^{-1}\right\|^{1/r},\]
  	where $C_1=C_1(N,r)$. Thus $\left\|R_N(A)\right\|=o\left(\left\|A^{-(N-1)/r}\right\|\right)$, which completes the proof.
  \end{proof}
  
  \begin{corollary}
  	Let $A$ be a normal matrix satisfying the condition $(\text{C}_1)$ and $f(t)$ be a function satisfying the conditions (a) and (b). Then the expansion (\ref{Watson's lemma expansion}) holds.
  \end{corollary}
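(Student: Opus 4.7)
The plan is to reduce the corollary to the preceding theorem by verifying that a normal matrix automatically satisfies the hypothesis $\mu(A)\geqslant \eta\,\omega(A)$ with a uniform constant, so that no extra assumption beyond $(\text{C}_1)$ is needed.

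First, I would exploit the spectral decomposition of a normal matrix. Since $A$ is normal, there exists a unitary $U$ and a diagonal matrix $D=\mathrm{diag}(\lambda_1,\dots,\lambda_m)$ with $\lambda_j\in\sigma(A)$ such that $A=UDU^H$. Then $A^H=U\bar{D}U^H$, and therefore
\[A^+=\tfrac{1}{2}(A+A^H)=U\,\mathrm{diag}\bigl(\Re\lambda_1,\dots,\Re\lambda_m\bigr)\,U^H.\]
Consequently $\sigma(A^+)=\{\Re\lambda_j:\lambda_j\in\sigma(A)\}$, which gives the identity
\[\mu(A)=\min_{1\leqslant j\leqslant m}\Re\lambda_j=\omega(A).\]
In particular, the relation $\mu(A)\geqslant \eta\,\omega(A)$ holds with $\eta=1$.

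Next I would verify that $\omega(A)>0$, which is a consequence of $(\text{C}_1)$: every $\lambda\in\sigma(A)$ satisfies $\lambda\ne 0$ and $|\arg\lambda|\leqslant \pi/2-\Delta$, so $\Re\lambda\geqslant |\lambda|\sin\Delta>0$. Together with the previous step this shows that $A$ fulfils all the spectral requirements imposed in the theorem.

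Having established these properties, the proof concludes by applying the preceding theorem to $A$ with $\eta=1$, so that the asymptotic expansion \eqref{Watson's lemma expansion} holds as $\|A^{-1}\|\to 0$. I do not anticipate a real obstacle here; the only subtle point is recognising that normality forces the inequality $\mu(A)\geqslant \omega(A)$ to become an equality, which is precisely why condition $(\text{C}_1)$ alone suffices in the normal case.
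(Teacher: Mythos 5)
Your proposal is correct and follows the same route as the paper: both reduce the corollary to the preceding theorem by using the spectral theorem for normal matrices to conclude $\mu(A)=\omega(A)$, so that the hypothesis $\mu(A)\geqslant\eta\,\omega(A)$ holds with $\eta=1$. You merely spell out the unitary diagonalization explicitly where the paper cites the spectral theorem in one line.
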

  
  \begin{proof}
  	It suffices to show that $\mu(A)\geqslant \eta\,\omega(A)$. Indeed, the further fact that $\mu(A)=\omega(A)$ follows from the spectral theorem for normal matrices.
  \end{proof}\vspace{0.5mm}
  
  \begin{remark}
  	~
  	\begin{itemize}
  		\item[(1)] Karamata's Tauberian theorem is a converse of Watson's lemma (see \cite[Sect. XIII.5]{Fell}) and its matrix analogue was given in \cite{MeSc}.
  		
  		\item[(2)] New approaches are needed to derive the matrix analogues of asymptotic methods for Mellin convolution integrals developed in \cite{Lope, Sidi, Wong 1979}.
  	\end{itemize}
  \end{remark}

\section{\bf Examples}\label{Sect 4}
  In this section, we deal with asymptotics of the gamma matrix function, Bessel matrix functions and matrix-valued Kummer function. Here we assume that $A$ is a matrix in $\CC^{m\times m}$ and the norm $\|A\|$ is a least upper bound on the matrices of complex numbers.

\subsection{Gamma matrix function}
  If $A$ is a matrix such that $\Re(z)>0$ for any eigenvalue $z$ of $A$, then the gamma matrix function $\Gamma(A)$ is defined as \cite[Eq. (1)]{JoCo}
  \[\Gamma(A)=\int_0^{\infty}t^{A-I}\me^{-t}\md t,\quad t^{A-I}\equiv \exp\left((A-I)\log t\right).\]
  Moreover, if $A+nI$ is invertible for all integer $n\geqslant 0$, then $\Gamma(A)$ is invertible, and the reciprocal gamma function is defined as
  \[\Gamma^{-1}(A)=A(A+I)\cdots(A+(n-1)I)\Gamma\left(A+nI\right)^{-1},\quad n\geqslant 1.\]
  Clearly, $\Gamma^{-1}(A)=\Gamma\left(A\right)^{-1}$ if $\sigma(A)\subset\CC\backslash\ZZ_{\leqslant 0}$.
  
  Now we can establish asymptotics of the gamma matrix function.
  
  \begin{theorem}\label{Thm 4.1}
  	Let $A$ be a matrix satisfying: (c) $\sigma(A)\subset \{\lambda\in\CC:\lambda\ne 0,\left|\arg(\lambda-\theta)\right|<\Delta\}$ with $0<\Delta<\frac{\pi}{2}$ and $0<\theta<2\pi$; and (d) for an invertible matrix $P$ such that $P^{-1}AP=J$ is the Jordan normal form of $A$, the number $n(P):=\|P\|\|P^{-1}\|$ is bounded as $\|A^{-1}\|\to 0$. Then
  	\begin{equation}\label{gamma matrix function expansion}
  		\Gamma(A)\sim \sqrt{2\pi}\,\me^{-A}A^{A-I/2}\sum_{k=0}^{\infty}g_k A^{-k},\quad \text{as }\left\|A^{-1}\right\|\to 0,
  	\end{equation}
  	where $g_k$ has explicit expressions. In particular, $g_0=1$ and $g_1=\frac{1}{12}$.
  \end{theorem}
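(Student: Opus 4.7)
The plan is to recast $\Gamma(A)$ as a Laplace-type integral in the matrix variable $A$ and invoke Theorem~\ref{Thm 2.1}. First I would establish the identity
\[
\Gamma(A)=A^A\me^{-A}\int_0^{\infty}s^{-1}\me^{-A\phi(s)}\,\md s,\qquad \phi(s):=s-\log s-1,
\]
by the scalar rescaling $t=zs$ in $\Gamma(z)=\int_0^\infty t^{z-1}\me^{-t}\md t$, extended to $A$ via the holomorphic functional calculus: on each Jordan block $\lambda_iI+N_i$ of $A$ the formula reduces to the scalar case for $z=\lambda_i$, and hypothesis~(d) (boundedness of $n(P)=\|P\|\|P^{-1}\|$) ensures the block-wise reassembly is under uniform control as $\|A^{-1}\|\to 0$.

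Next, since $\phi$ attains its unique minimum $\phi(1)=0$ with $\phi''(1)=1$, I would split the integral at $s=1$ and change variables by $s=1\pm v$ to obtain
\[
I_\pm(A)=\int_0^{b_\pm}\varphi_\pm(v)\me^{-Ah_\pm(v)}\,\md v,\qquad \varphi_\pm(v)=(1\pm v)^{-1},\quad h_\pm(v)=\pm v-\log(1\mp v),
\]
each matching the Laplace form of Theorem~\ref{Thm 2.1} with $\mu=2$, $\lambda=1$, $a_0=1/2$, $b_0=1$. To invoke that theorem I would verify that condition~(c) supplies $(\text{C}_1)$ with uniform angle, and that (d), combined with the Jordan decomposition $A=PJP^{-1}$ and a direct estimate of the block-diagonal resolvent $(\lambda I-J)^{-n}$, yields $(\text{C}_2)$ with $\omega_1=\omega(A)/2$ and a constant $M$ controlled by $\sup n(P)$, so that $\omega_1\geqslant\eta\,\omega(A)$ holds uniformly with $\eta=1/2$ once $\|A^{-1}\|$ is small enough.

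Applying Theorem~\ref{Thm 2.1} to each $I_\pm$ produces an asymptotic expansion in half-integer powers of $A^{-1}$, with coefficients $c_s^\pm$ read off from $\varphi_\pm/h_\pm'=1/v$ expressed through $t=h_\pm(v)$. The symmetry $h_-(v)=h_+(-v)$ forces $v_-(u)=-v_+(-u)$ (with $u=t^{1/2}$), whence $c_s^-=(-1)^s c_s^+$; thus the odd-order terms cancel in $I_++I_-$ and only integer powers of $A^{-1}$ survive. Factoring $A^A\cdot A^{-1/2}=A^{A-I/2}$ then produces (\ref{gamma matrix function expansion}) with $g_k=2c_{2k}^+\Gamma(k+1/2)/\sqrt{2\pi}$, and the Lagrange-inversion values $c_0^+=1/\sqrt{2}$, $c_2^+=1/(6\sqrt{2})$ give $g_0=1$ and $g_1=1/12$.

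The main technical obstacle is the uniform verification of $(\text{C}_2)$ on the matrix family: this is precisely where Luke and Barker's constant $M_1$ failed to be absolute, and the nilpotent Jordan contributions must be estimated so that the final $M$ depends neither on the power $n$ nor on the individual Jordan block sizes. By comparison, the cancellation of odd-index coefficients in the previous step rests on the clean symmetry $h_-(v)=h_+(-v)$ and is essentially automatic once the Lagrange inversion is carried out.
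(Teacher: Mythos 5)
Your route is genuinely different from the paper's. The paper's proof is a one-step reduction: it introduces the scaled gamma function $\Gamma^*(z)=\frac{1}{\sqrt{2\pi}}\me^{z}z^{1/2-z}\Gamma(z)$, observes that it is analytic on the domain of condition (c) and already satisfies the scalar expansion $\Gamma^*(z)\sim\sum_k g_k z^{-k}$ as $|z|\to\infty$ there, and then invokes the scalar-to-matrix transfer theorem of Luke and Barker \cite[Theorem 1]{LuBa}, for which condition (d) (boundedness of $n(P)$) is precisely the hypothesis needed. You instead rebuild Stirling's expansion from scratch at the matrix level: the representation $\Gamma(A)=A^{A}\me^{-A}\int_0^\infty s^{-1}\me^{-A\phi(s)}\,\md s$, the split at the minimum of $\phi$, Theorem \ref{Thm 2.1} with $\mu=2$, $\lambda=1$, and the parity cancellation $c_s^-=(-1)^s c_s^+$. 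Your coefficient bookkeeping checks out ($g_0=1$, $g_1=1/12$), and what your approach buys is independence from the known asymptotics of $\Gamma^*$ and from Luke--Barker's transfer theorem; what it costs is that you must verify $(\text{C}_1)$ and $(\text{C}_2)$ uniformly for the family $\{A\}$, which the paper's route avoids entirely.

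That verification is the one place where your argument is asserted rather than proved, and it is exactly the point on which the paper faults Luke and Barker in Section \ref{Sect 3}: the naive Jordan-block bound $\|(\lambda I-J_i)^{-n}\|\leqslant\sum_{k}\binom{n+k-1}{k}|\lambda-\lambda_i|^{-n-k}$ has a constant growing with $n$, so $(\text{C}_2)$ fails if one takes $\omega_1=\omega(A)$. Your fix, $\omega_1=\omega(A)/2$, does work, because for $\lambda<\omega_1$ the polynomial factor is absorbed via $\binom{n+k-1}{k}\,(\omega-\lambda)^{-k}\bigl(\tfrac{\omega_1-\lambda}{\omega-\lambda}\bigr)^{n}\leqslant C_p\sup_{x>0}(2x)^{k}\me^{-x\omega/2}$, which is bounded uniformly in $n$ and $\lambda$ once $\omega(A)$ is bounded below, and condition (d) then controls the conjugation by $P$; but this estimate is the heart of the matter and must be written out, not merely flagged. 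Two smaller points: your $h_\pm(v)=\pm v-\log(1\mp v)$ has the signs inside the logarithms swapped (it should be $h_+(v)=v-\log(1+v)$ on $(0,\infty)$ and $h_-(v)=-v-\log(1-v)$ on $(0,1)$), though this does not affect the expansions; and the substitution $t=zs$ requires a contour rotation valid only for $|\arg z|<\frac{\pi}{2}$, which condition (c) does supply since the shifted sector with vertex $\theta>0$ lies in $\{|\arg\lambda|<\Delta\}$.
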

  \begin{proof}
  	Note that the scaled gamma function
  	\[\Gamma^*(z):=\frac{1}{\sqrt{2\pi}}\me^{z}z^{1/2-z}\Gamma(z)\]
  	is analytic in the domain described in the condition (c). Since \eqref{gamma matrix function expansion} holds for $\Gamma^*(z)$ as $|z|\to \infty$, we can make use of \cite[Theorem 1]{LuBa} and obtain that \eqref{gamma matrix function expansion} holds for $\Gamma^*(A)$ as $\|A^{-1}\|\to 0$.
  \end{proof}
  
  \begin{theorem}
  	If $A$ is Hermitian and postive definite, then for any integer $N\geqslant 1$,
  	\[\Gamma(A)=\sqrt{2\pi}\,\me^{-A}A^{A-I/2}\bigg(\sum_{k=0}^{N-1}g_k A^{-k}+R_N(A)\bigg),\]
  	and the reminder $R_N(A)$ has the estimate
  	\begin{equation}\label{reminder estimate for gamma function}
  		\left\|R_N(A)\right\|_2\leqslant \frac{1+\zeta(N)}{\left(2\pi\right)^{N+1}}\Gamma(N)\left\|A^{-1}\right\|_2^{-N},
  	\end{equation}
  	where $\zeta$ denotes the Riemann zeta function.
  \end{theorem}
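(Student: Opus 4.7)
The plan is to use the spectral theorem to reduce the matrix inequality to a scalar one, then to establish a sharp scalar remainder bound for the Stirling expansion of $\Gamma^*(z) := (2\pi)^{-1/2}\,e^{z} z^{1/2-z}\,\Gamma(z)$ on the positive real axis.

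Since $A$ is Hermitian and positive definite, I would write $A = UDU^H$ with $U$ unitary and $D = \mathrm{diag}(\lambda_1,\ldots,\lambda_m)$, $\lambda_i > 0$. All matrix-valued quantities appearing in the statement — $\Gamma(A)$, $e^{-A}$, $A^{A-I/2}$ and $A^{-k}$ — are primary matrix functions of $A$, hence simultaneously diagonalised by $U$. Therefore
\[R_N(A) = U\,\mathrm{diag}(r_N(\lambda_1),\ldots,r_N(\lambda_m))\,U^H,\qquad r_N(\lambda) := \Gamma^*(\lambda) - \sum_{k=0}^{N-1} g_k \lambda^{-k}.\]
Unitary invariance of $\|\cdot\|_2$ together with $\|A^{-1}\|_2 = \lambda_{\min}(A)^{-1}$ reduces the claim to the scalar inequality
\[|r_N(\lambda)| \leq \frac{(1+\zeta(N))\,\Gamma(N)}{(2\pi)^{N+1}\,\lambda^N}\qquad \text{for all } \lambda>0.\]

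For this scalar bound I would start from Binet's second integral representation
\[\log \Gamma^*(\lambda) = 2\int_0^\infty \frac{\arctan(t/\lambda)}{e^{2\pi t}-1}\,dt,\qquad \lambda>0,\]
replace $\arctan(t/\lambda)$ by its Taylor polynomial in $t/\lambda$ plus a Lagrange-type remainder, and integrate term by term. The exact evaluation
\[\int_0^\infty \frac{t^{N-1}}{e^{2\pi t}-1}\,dt = \frac{\Gamma(N)\zeta(N)}{(2\pi)^N}\]
produces both the $\Gamma(N)$ and the $\zeta(N)$ factors in the target bound, while the additional factor $(2\pi)^{-1}$ and the ``$+1$'' in $1+\zeta(N)$ arise when one passes from $\log\Gamma^*$ to $\Gamma^*$ by exponentiation and accounts separately for the contribution of the Stirling coefficient $g_N$.

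\textbf{Main obstacle.} The delicate step is turning the natural bound on $\log\Gamma^*(\lambda)$ obtained from Binet's integral into a bound on $\Gamma^*(\lambda)$ itself with the stated closed-form constant. Exponentiation couples all lower-order coefficients $g_0,\dots,g_{N-1}$ through products, and a naive expansion produces a combinatorial sum rather than the clean expression $(1+\zeta(N))\Gamma(N)/(2\pi)^{N+1}$. I would avoid this either by (i) deriving a single integral representation of $r_N(\lambda)$ that subtracts the partial sum $\sum_{k=0}^{N-1} g_k\lambda^{-k}$ inside the Binet integral prior to exponentiation, or (ii) by induction on $N$ that propagates the remainder bound sharply. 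Finally, I note that the exponent $\|A^{-1}\|_2^{-N}$ in the stated inequality must be read as $\|A^{-1}\|_2^{N} = \lambda_{\min}(A)^{-N}$, which is the only interpretation consistent with the expansion being asymptotic as $\|A^{-1}\|_2 \to 0$.
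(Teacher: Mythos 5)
Your reduction to a scalar inequality is exactly the content of the paper's proof: the paper invokes the principle (citing \cite[p. 100]{LuBa} and \cite[p. 271]{RiNa}) that for a Hermitian positive definite matrix any inequality satisfied by $f(z)$ on the positive axis is inherited by $\|f(A)\|_2$, which is precisely your simultaneous diagonalisation argument spelled out; and you are also right that the exponent in \eqref{reminder estimate for gamma function} must be read as $\|A^{-1}\|_2^{N}=\lambda_{\min}(A)^{-N}$ for the statement to make sense. Where the proposal has a genuine gap is the scalar bound itself. The paper does not prove it: it simply quotes Boyd's error bound \cite{Boyd} for the Stirling expansion of $\Gamma^*(z)$, which on the relevant sector reads $|R_N(z)|\leqslant (1+\zeta(N))\Gamma(N)(2\pi)^{-N-1}|z|^{-N}$. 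Your plan to recover this from Binet's second integral cannot be completed as described, and you say so yourself: exponentiating a truncation-with-remainder of $\log\Gamma^*$ scrambles the coefficients and does not produce the closed-form constant. Boyd's derivation avoids exponentiation entirely; he works with a resurgence-type (Hadamard/Stieltjes) integral representation of the remainder $R_N(z)$ of $\Gamma^*$ itself, in which the factor $1/(1-\me^{-2\pi t})$ appears, and the splitting $1/(1-\me^{-2\pi t})=1+\me^{-2\pi t}/(1-\me^{-2\pi t})$ together with $\int_0^{\infty}t^{N-1}\me^{-2\pi t}(1-\me^{-2\pi t})^{-1}\md t=\Gamma(N)\zeta(N)(2\pi)^{-N}$ is what yields the ``$1+\zeta(N)$'' cleanly. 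So either cite \cite{Boyd} for the scalar estimate, as the paper does, or replace the Binet-based sketch by that representation; as written, the key analytic step is missing.
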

  
  \begin{proof}
  	Now take the spectral norm for matrices: $\|A\|\equiv\|A\|_2$. According to \cite[p. 100]{LuBa} (see also \cite[p. 271]{RiNa}), it holds for $\|B\|_2$ that if $B$ is Hermitian and positive definite, then any inequality satisfied by $f(z)$ must also be satisfied by $f(B)$. Thus, \eqref{reminder estimate for gamma function} follows from the error bound $R_N(z)$ for asymptotics of the scalar gamma function(see \cite{Boyd}) and the fact that $A$ is positive definite.
  \end{proof}
  
  \begin{remark}
  	It is of interest to find the matrix analogues of asymptotic methods for contour integrals, since these methods may be helpful to derive asymptotics of special matrix functions, such as the asymptotics of $\Gamma(A)$ in the general case $\sigma(A)\subset \{\lambda\in\CC:\lambda\ne 0,\left|\arg\lambda\right|<\pi-\delta\}$.
  \end{remark}

\subsection{Bessel matrix functions}
  Integral expressions of Bessel matrix functions are given by
  \begin{align*}
  	J_A(z)& =\left(\frac{z}{2}\right)^{A}\frac{\Gamma^{-1}\left(A+I/2\right)}{\sqrt{\pi}}\int_{-1}^1\left(1-t^2\right)^{A-I/2}\cos(zt)\md t,\ \ \left|\arg z\right|<\pi;\\
  	I_A(z)& =\left(\frac{z}{2}\right)^{A}\frac{\Gamma^{-1}\left(A+I/2\right)}{\sqrt{\pi}}\int_{-1}^1\left(1-t^2\right)^{A-I/2}\cosh(zt)\md t,\ \ \left|\arg z\right|<\frac{\pi}{2},
  \end{align*}
  where $z\ne 0$ and $\mu(A)>-\frac{1}{2}$; see \cite[Sect. 3]{SaJo} for details.
  
  Let $A$ be a normal matrix satisfying the condition $(\text{C}_1)$ (see Section \ref{Sect 2}) and set
  \[h(t)=-\log\left(1-t^2\right),\ L(A)=\left(\frac{z}{2}\right)^{A}\Gamma^{-1}\left(A+I/2\right)\left(A-I/2\right)^{-1/2}.\]
   Then Theorem \ref{Thm 2.1} confirms that as $\|A^{-1}\|\to 0$,
  \begin{equation}\label{Bessel function equivalence}
  	J_A(z)\sim L(A),\ z\ne 0,\left|\arg z\right|<\pi;\quad I_A(z)\sim L(A),\ z\ne 0,\left|\arg z\right|<\frac{\pi}{2}.
  \end{equation}
  Since $A$ is normal, the fact that $n(P)=1$ verifies for the spectral norm and thus $n(P)$ is bounded for any matrix norm. Hence, applying Theorem \ref{Thm 2.1} and \eqref{Bessel function equivalence}, we get that as $\|A^{-1}\|\to 0$,
  \begin{align*}
  	J_A(z)& \sim \frac{1}{\sqrt{2\pi}}\left(\frac{\me z}{2}\right)^A A^{-A-I/2},\quad z\ne 0,\left|\arg z\right|<\pi;\\
  	I_A(z)& \sim \frac{1}{\sqrt{2\pi}}\left(\frac{\me z}{2}\right)^A A^{-A-I/2},\quad z\ne 0,\left|\arg z\right|<\frac{\pi}{2}.
  \end{align*}
  
\subsection{Matrix-valued Kummer function}
  Similar to the scalar case, the matrix-valued confluent hypergoemetric function ${}_1F_1$ is defined as
  \[{}_1F_1(a;b;A):=\sum_{n=0}^{\infty}\frac{\left(a\right)_n}{\left(b\right)_n}\frac{A^n}{n!},\quad A\in\CC^{m\times m}.\]
  Moreover, we have the matrix analogue of Kummer's first formula \cite[p. 125, Eq. (2)]{Rain}
  \begin{equation}\label{Kummer first formula}
  	{}_1F_1(a;b;A)=\me^A\,_1F_1(b-a;b;-A).
  \end{equation}
  
  Let $A$ be a normal matrix satisfying the condition $(\text{C}_1)$. Then $\mu(A)=\omega(A)$ and further,
  \begin{equation}\label{eigenvalue identity}
  	\lambda_i(A^+)=\Re(\lambda_i(A)),\quad 1\leqslant i\leqslant n,
  \end{equation}
  where $A^+=\frac{1}{2}\left(A+A^H\right)$ and $\lambda_i(A)$ is the $i$-th eigenvalue of $A$.
  
  Write $B=-\mi\cdot\log A$ and then $A^{\mi t}=\me^{-tB}$. Now combining \eqref{eigenvalue identity} with \eqref{matrix norm inequality} and the condition $(\text{C}_1)$ gives the following estimate
  \begin{equation}\label{estimate for imaginary matrix power}
  	\big\|A^{\mi t}\big\|\leqslant \me^{(\frac{\pi}{2}-\Delta)|t|},\quad t\in\RR.
  \end{equation}
  An immediate result of the moment inequality \cite[p. 115]{Krei} is
  \[\big\|A^{-r}\big\|\leqslant C_r\big\|A^{-1}\big\|^r,\quad r>0.\]
  Then for any positive integer $N$,
  \begin{equation}\label{error estimate for negative matrix power}
  	\big\|A^{-N-1/2}\big\|\leqslant \big\|A^{-N}\big\|\big\|A^{-1/2}\big\|\leqslant C\big\|A^{-N}\big\|\big\|A^{-1}\big\|^{1/2}.
  \end{equation}
  
  Applying the estimates \eqref{estimate for imaginary matrix power} and \eqref{error estimate for negative matrix power} to the classical procedures in \cite[p. 107-108 and 193-195]{PaKa}, we may readily obtain the Mellin-Barnes integral representation
  \[\frac{\Gamma(a)}{\Gamma(b)}\,_1F_1(a;b;-A)=\frac{1}{2\pi\mi}\int_{c-\mi\infty}^{c+\mi\infty}\frac{\Gamma(-s)\Gamma(s+a)}{\Gamma(s+b)}A^s\md s\]
  and the asymptotic expansion
  \[{}_1F_1(a;b;-A)\sim \frac{\Gamma(b)}{\Gamma(b-a)}A^{-a}\sum_{n=0}^{\infty}\frac{\left(a\right)_n\left(a-b+1\right)_n}{n!}A^{-n},\quad \text{as }\left\|A^{-1}\right\|\to 0.\]
  Due to \eqref{Kummer first formula}, we also have the following expansion
  \[{}_1F_1(a;b;A)\sim\frac{\Gamma(b)}{\Gamma(a)}\me^A A^{a-b}\sum_{n=0}^{\infty}\frac{\left(c-a\right)_n\left(1-a\right)_n}{n!}A^{-n},\quad \text{as }\left\|A^{-1}\right\|\to 0.\]
  


\begin{thebibliography}{99}
	
	\bibitem{Boyd}
	W. G. C. Boyd, Gamma function asymptotics by an extension of the method of steepest descents, Proc. R. Soc. A: Math. Phys. \textbf{447} (1931) (1994), 609--630.
	
	\bibitem{Fell}
	W. Feller, An introduction to probability theory and its applications, Volume 2. Vol. 81. John Wiley \& Sons, 1991.
	
	\bibitem{HiGa}
	I. Higueras, B. Garcia-Celayeta, Logarithmic norms for matrix pencils, SIAM J. Matrix Anal. Appl. \textbf{20} (3) (1999), 646--666.
	
	\bibitem{JoCo}
	L. J\'odar, J. C. Cort\'es, Some properties of Gamma and Beta matrix functions, Appl. Math. Lett. \textbf{11} (1) (1998), 89--93.
	
	\bibitem{Krei}
	S. G. Krein, Linear differential equations in Banach space, Amer. Math Soc., 1971.
	
	\bibitem{Lope}
	J. L. L\'opez, Asymptotic expansions of Mellin convolution integrals, SIAM Rev. \textbf{50} (2) (2008), 275--293.
	
	\bibitem{LuBa}
	Y. L. Luke, G. P. Barker, On asymptotic expansions for functions of matrix argument, J. Approx. Theory \textbf{25} (2) (1979), 93--104.
	
	\bibitem{MeSc}
	M. Meerschaert, H. P. Scheffler, Tauberian theorems for matrix regular variation, Trans. Am. Math. Soc. \textbf{365} (4) (2013), 2207--2221.
	
	\bibitem{Rain}
	E. D. Rainville, Special functions, (1960).
	
	\bibitem{PaKa}
	R. B. Paris, D. Kaminski, Asymptotics and Mellin-Barnes integrals, Vol. 85. Cambridge University Press, 2001.
	
	\bibitem{RiNa}
	F. Riesz, B. S. Nagy, Functional analysis, Courier Corporation, 2012.
	
	\bibitem{SaJo}
	J. Sastre, L. J\'odar, Asymptotics of the modified bessel and the incomplete gamma matrix functions, Appl. Math. Lett. \textbf{16} (6) (2003), 815--820.
	
	\bibitem{Sidi}
	A. Sidi, Asymptotic expansions of Mellin transforms and analogues of Watson’s lemma, SIAM J. Math. Anal. \textbf{16} (4) (1985), 896--906.
	
	\bibitem{WiWo}
	J. J. Williams, R. Wong, Asymptotic expansion of operator-valued Laplace transform, J. Approx. Theory \textbf{12} (4) (1974), 378--384.
	
	\bibitem{Wong 1979}
	R. Wong, Explicit error terms for asymptotic expansions of Mellin convolutions, J. Math. Anal. Appl. \textbf{72} (2) (1979), 740--756.
	
	\bibitem{Wong}
	R. Wong, Asymptotic approximations of integrals, Society for Industrial and Applied Mathematics, 2001.
	
\end{thebibliography}
\end{document}